\newtheorem{Theorem}{Theorem}[section]
\newtheorem{Lemma}{Lemma}[section]
\newtheorem{Corollary}{Corollary}[section]
\newtheorem{Definition}{Definition}[section]
\numberwithin{equation}{section}
\begin{document}
\sloppy
\title[Riesz transforms and the pressure in the Navier-Stokes equations]
{On the use of the Riesz transforms to determine the pressure term in the 
incompressible Navier-Stokes equations on the whole space}

\author[B. \'Alvarez-Samaniego]{Borys \'Alvarez-Samaniego}
\address{\vspace{-8mm}N\'ucleo de Investigadores Cient\'{\i}ficos\\
	    Facultad de Ciencias\\
	    Universidad Central del Ecuador (UCE)\\
    	Quito, Ecuador}
\email{balvarez@uce.edu.ec, borys\_yamil@yahoo.com}

\author[W. P. \'Alvarez-Samaniego]{Wilson P. \'Alvarez-Samaniego}
\address{\vspace{-8mm}N\'ucleo de Investigadores Cient\'{\i}ficos\\
        Facultad de Ciencias\\
	    Universidad Central del Ecuador (UCE)\\
    	Quito, Ecuador}
\email{wpalvarez@uce.edu.ec, alvarezwilson@hotmail.com}

\author[P. G. Fern\'andez-Dalgo]{Pedro Gabriel Fern\'andez-Dalgo}
\address{\vspace{-8mm}N\'ucleo de Investigadores Cient\'{\i}ficos\\
        Facultad de Ciencias\\
    	Universidad Central del Ecuador (UCE)\\
    	Quito, Ecuador \\
    	\vspace{2mm}
    	LaMME, Universit\'e d'Evry Val d'Essonne, CNRS\\ 
    	Universit\'e Paris-Saclay\\
    	91025, Evry, France}
\email{pedro.fernandez@univ-evry.fr}

\date{March 23, 2020} 

\begin{abstract}
We give some conditions under which the pressure term in the incompressible 
Navier-Stokes equations on the entire $d$-dimensional Euclidean space is 
determined by the formula $\displaystyle \nabla p = \nabla \left(\sum_{i,j=1}^d 
\mathcal{R}_i \mathcal{R}_j (u_i u_j - F_{i,j}) \right)$, where $d \in \{2, 3\}$, 
${\textbf{u}} := (u_1, \ldots ,u_d)$ is the fluid velocity, 
$\mathbb{F}:= (F_{i,j})_{1\le i,j\le d}$ is the forcing tensor, and for all 
$k \in \{1, \ldots, d\}$, $\mathcal{R}_k$ is the $k$-th Riesz transform.
\end{abstract}

\subjclass[2010]{35Q30; 76D05}
\keywords{Navier-Stokes equations; pressure term; weighted spaces}

\maketitle
\section{Introduction} \label{sec:intro}
The incompressible Navier-Stokes equations on the Euclidean space $\mathbb{R}^d$, 
with $d \in \mathbb{Z}^+$, are the key governing equations of viscous fluid flows 
with a divergence-free fluid velocity vector field occupying all the 
$\mathbb{R}^d$ space, which are given by 
\begin{equation*}  \label{Eq:NS}
  \tag{NS}
  \left\{
	\begin{array}{l}
	  \partial_t {\textbf{u}} = \Delta {\textbf{u}} - ({\textbf{u}} \cdot \nabla)
      {\textbf{u}}- \nabla p +\nabla \cdot \mathbb{F},  \\  \\
      \nabla \cdot {\textbf{u}}  =0, 
    \end{array}
  \right.
\end{equation*}
where ${\textbf{u}} := (u_1, \ldots ,u_d)$ is the fluid velocity, $p$ is the fluid 
pressure, $\mathbb{F}:= (F_{i,j})_{1\le i,j\le d}$ is the forcing tensor,   
$\displaystyle \nabla \cdot \mathbb{F}:= \left (\sum_{i=1}^d \partial_i F_{i,1}, 
\ldots, \sum_{i=1}^d \partial_i F_{i,d} \right)$, and 
$\displaystyle \nabla \cdot \textbf{u}:= \sum_{i=1}^d \partial_i u_i$.

We now introduce several notations, preliminary results and definitions 
that will be helpful in the sequel.  For all 
$y := (y_1, \ldots, y_d) \in \mathbb{R}^d$, we write 
$|y|:= \sqrt{|y_1|^2 + \cdots + |y_d|^2}$.  For any $d \in \mathbb{Z}^+$,  
and for every $\gamma > 0$, we denote by $w_{\gamma}:=w_{d, \gamma}$, 
the weight function, provided by
\begin{align*}
   w_{\gamma} \colon \mathbb{R}^d &\longrightarrow \mathbb{R}  \\
   x &\longmapsto w_\gamma (x):= \frac{1}{(1+|x|)^{\gamma}}.
\end{align*}
For all $d \in \mathbb{Z}^+$, for every $\delta > 0$, and for each 
$1\le p <+\infty$, we use the notation $L^p_{w_\delta}(\mathbb{R}^d)$ 
to represent the weighted $L^p$-space given by 
$L^p(\mathbb{R}^d, w_\delta (x) \, dx)$.  Moreover, for all 
$d \in \mathbb{Z}^+$, and for any $j \in \{1, \ldots, d \}$, 
$\mathcal{R}_j:=\mathcal{R}_{d, j}$ denotes the $j$-th \emph{Riesz transform}, 
given by 
\begin{equation*}
  \mathcal{R}_j := \frac{\partial_j}{\sqrt{-\Delta}}.
\end{equation*}  
Let ${\textbf{b}}$ and ${\textbf{u}}$ be two vector fields on 
$\mathbb{R}^d$.  The \textit{tensor product}, 
${\textbf{b}} \otimes {\textbf{u}}$, of ${\textbf{b}}$ and 
${\textbf{u}}$, is defined as the $d \times d$-matrix, given, 
for all $i, j \in \{1, \ldots, d\}$, by 
$({\textbf{b}} \otimes {\textbf{u}})_{i, j} := b_i u_j$.  Thus, 
if $\nabla \cdot {\textbf{b}} = 0$, we have that 
$({\textbf{b}} \cdot \nabla) {\textbf{u}} 
= \nabla \cdot ({\textbf{b}}\otimes {\textbf{u}})$.

Recently, P.G. Fern\'andez-Dalgo and P.G. Lemarié-Rieusset 
(\hspace{1sp}\cite{PF_PG2}) gave a general characterization for the pressure 
term in the incompressible Navier-Stokes equations (\ref{Eq:NS}) on the whole 
Euclidean space $\mathbb{R}^d$, with $d \in \{2,3\}$. In \cite{PF_PG2}, the 
authors consider velocities $\textbf{u} = (u_1,...,u_d)$ belonging to 
$L^2 ((0,T), L^2_{w_\gamma}(\mathbb{R}^{d}))$, with $\gamma \in \{ d , d+1 \}$. 
In addition, if $\textbf{u} \in L^2 ((0,T), L^2_{w_d}(\mathbb{R}^d))$, and 
$\mathbb{F} (t,x) := \left(F_{i,j}(t,x)\right)_{1\leq i,j\leq d}$ 
belongs to $L^1\left((0,T), L^1_{w_{d}}(\mathbb{R}^d)\right)$, 
it was also shown in \cite{PF_PG2} that $p$ is given, up to constants, by
\begin{equation*}
  p=\sum_{i,j=1}^d(\varphi \, \partial_i \partial_j G_d) * (u_iu_j-F_{i,j}) +   
  \sum_{i,j=1}^d((1-\varphi) \, \partial_i \partial_j G_d) * (u_iu_j-F_{i,j}),
\end{equation*} 
where $\varphi \in C_0^{\infty}(\mathbb{R}^d) := 
\{ \psi:\mathbb{R}^d \to \mathbb{C}; \; \psi \in C^{\infty}(\mathbb{R}^d) 
\text{ and } \psi \text{ has compact support} \}$ 
is a real-valued function such that $\varphi = 1$ on 
a neighborhood of $0$, $*$ stands for the \textit{convolution product}, 
and $G_d$ is the \textit{fundamental solution of the Laplace equation} on 
$\mathbb{R}^d$, i.e., $-\Delta G_d=\delta$.  So, for all 
$x \in \mathbb{R}^d \smallsetminus \{0\}$, with $d \in \{2, 3\}$,
\begin{equation*} 
  G_2(x) = \frac 1{2\pi}\ln \left(\frac 1{\vert x\vert}\right)
  \quad \text{ and } \quad G_3(x) = \frac 1{4\pi\vert x\vert}.
\end{equation*}  
Furthermore, in the more favorable case, when 
$\textbf{u} \in L^\infty ((0,T),  L^2_{w_\gamma}(\mathbb{R}^3))$, 
$\nabla {\textbf{u}} \in L^2((0,T), L^2_{w_\gamma}(\mathbb{R}^3))$, and
$\mathbb{F} \in L^2((0,T), L^2 _{w_{\gamma} }(\mathbb{R}^3))$, 
with $0 < \gamma < \frac{5}{2}$, the authors in \cite{PF_PG} showed 
that $p$ is determined, up to constants, by the more simple formula 
\begin{equation*}
  p = \sum_{i,j=1}^3 \mathcal{R} _i \mathcal{R} _j (u_i u_j - F_{i,j}).
\end{equation*}

The following definition concerns the \textit{Muckenhoupt class} of 
weights.
\begin{Definition}
Let $d \in \mathbb{Z}^+$, and $1<p<+\infty$.  A nonnegative real-valued 
measurable function, $w :\mathbb{R}^d \to [0, + \infty)$, belongs to the 
\emph{Muckenhoupt} $\mathcal{A}_p(\mathbb{R}^d)$ \emph{class} if it is 
locally integrable and it satisfies the \emph{reverse H\"older inequality},  
with conjugate exponents $p$ and $q:= \frac{p}{p-1}$, given by 
\begin{equation} \label{Eq:muckenhoupt}
  \sup_{x\in\mathbb{R}^d, R>0}  
  \left(\frac 1{\vert B(x,R)\vert}  \int_{B(x,R)} 
  \! \! \! \! \! w(y)\, dy \right)^{\frac 1 p}
  \cdot
  \left( \frac 1{\vert B(x,R)\vert} \int_{B(x,R)} 
  \! \! \! \! \! w(y)^{-\frac 1{p-1}} dy 
  \right)^{1-\frac 1 p}  \!\!<+\infty.
\end{equation}
\end{Definition}
The next result is a $d$-dimensional generalization of Lemma 1 in 
\cite{PF_PG}, where it was assumed that $d=3$.
\begin{Lemma}[Muckenhoupt weights] \label{Lemma:Muck} 
Let $d \in \mathbb{Z}^+$, $0<\delta<d$, and $1<p<+\infty$.  Then,  
$w_\delta$ belongs to the Muckenhoupt $\mathcal{A}_p(\mathbb{R}^d)$ 
class.
\end{Lemma}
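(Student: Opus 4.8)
The plan is to verify the defining inequality \eqref{Eq:muckenhoupt} directly, by estimating the two averages over an arbitrary ball $B(x,R)$ and exploiting an exact cancellation of exponents. Writing $q := \frac{p}{p-1}$ and $\delta' := \frac{\delta}{p-1}$, the inverse weight is $w_\delta^{-1/(p-1)}(y) = (1+|y|)^{\delta'}$, so the object I must bound uniformly in $x \in \mathbb{R}^d$ and $R > 0$ is
$$P(x,R) := \left(\frac{1}{|B(x,R)|}\int_{B(x,R)} (1+|y|)^{-\delta}\,dy\right)^{\frac1p}\left(\frac{1}{|B(x,R)|}\int_{B(x,R)} (1+|y|)^{\delta'}\,dy\right)^{1-\frac1p}.$$
The guiding observation is the identity $-\frac{\delta}{p} + \delta'\left(1 - \frac1p\right) = -\frac{\delta}{p} + \frac{\delta}{p} = 0$, so whenever both averages scale like a common power of a single length, that power cancels in $P$.

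First I would dispose of the underlying radial integral. For $s > -d$, setting $I_s(\rho) := \int_0^\rho (1+r)^s r^{d-1}\,dr$, a routine splitting of the integral at $r = 1$ gives $I_s(\rho) \asymp \rho^d$ for $0 < \rho \le 1$ and $I_s(\rho) \asymp \rho^{d+s}$ for $\rho \ge 1$. Here the hypothesis $\delta < d$ enters through the requirement $-\delta > -d$, which is exactly what makes $\int_{B(0,\rho)}(1+|y|)^{-\delta}\,dy \asymp \rho^{d-\delta}$ grow, rather than saturate to a constant, as $\rho \to \infty$.

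Next I would split the analysis into two regimes according to the size of the ball relative to its distance from the origin. In the local regime $2R \le 1 + |x|$, every $y \in B(x,R)$ satisfies $\frac12(1+|x|) \le 1 + |y| \le \frac32(1+|x|)$, so the two averages are comparable to $(1+|x|)^{-\delta}$ and $(1+|x|)^{\delta'}$ respectively, and the exponent cancellation yields $P(x,R) \asymp (1+|x|)^{0} = 1$; this case needs only $\delta > 0$. In the global regime $2R > 1 + |x|$ one has $B(x,R) \subseteq B(0,3R)$; since both integrands are nonnegative, enlarging the domain of integration only increases each integral, and applying $I_{-\delta}$ and $I_{\delta'}$ with $\rho = 3R$ (treating $R \ge 1$ and $R < 1$ separately, the latter being trivial since $1+|y|$ is then bounded above and below by constants) gives averages $\lesssim R^{-\delta}$ and $\lesssim R^{\delta'}$, whence again $P(x,R) \lesssim R^{-\delta/p + \delta'(1-1/p)} = 1$.

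Taking the supremum over the two regimes then shows $P(x,R)$ is bounded by a constant depending only on $d$, $p$, and $\delta$, which is precisely \eqref{Eq:muckenhoupt}. The step I expect to be the main obstacle is the global regime: one must check that the crude enlargement to $B(0,3R)$ does not destroy the cancellation, and this is exactly where $\delta < d$ is indispensable, since for $\delta \ge d$ the $w_\delta$-average would decay like $R^{-d}$ instead of $R^{-\delta}$, and the product $P(x,R)$ would then diverge as $R \to \infty$.
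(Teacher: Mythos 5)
Your proposal is correct and follows essentially the same strategy as the paper's proof: pointwise comparability $1+|y| \asymp 1+|x|$ on balls that are small relative to their distance from the origin, and, for the remaining balls, enlargement to an origin-centered ball followed by the radial-integral computation (which is where $\delta < d$ enters) and the exact exponent cancellation $-\frac{\delta}{p} + \frac{\delta}{p-1}\left(1-\frac{1}{p}\right) = 0$. The only difference is organizational: your single dichotomy $2R \le 1+|x|$ versus $2R > 1+|x|$ merges the paper's three cases ($R \le 1$; $R > 1$ with $|x| > 10R$; $R > 1$ with $|x| \le 10R$) into two, a slightly tidier packaging of the same argument.
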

\begin{proof} 
We suppose that $d \in \mathbb{Z}^+$, $0<\delta<d$, and $1<p<+\infty$. 
Let $x\in \mathbb{R}^d$. Let $y \in \mathbb{R}^d$ be such that 
$\vert y-x\vert<R$.  \\
{\bf{(i)}} First, we assume that $R\in(0, 1]$. Then,
$1+\vert y\vert \le 1 + \vert x-y \vert + \vert x \vert 
<  1 + R + \vert x \vert \le 2 + \vert x \vert \le 2(1 + \vert x \vert)$.
Proceeding similarly as in the last expression, we get 
$1+ \vert x \vert < 2(1 + \vert y \vert)$.  Hence, 
\begin{equation} \label{Eq:Rsmall}
  \frac 1 2 (1+\vert x\vert) < 1+\vert y\vert < 2 (1+\vert x\vert). 
\end{equation}
It follows from (\ref{Eq:Rsmall}) that 
\begin{align*}
     & \left(\frac 1{\vert B(x,R)\vert}  
       \int_{B(x,R)}  w_{\delta}(y)\, dy\right)^{\frac 1 p} 
       \cdot \left( \frac 1{\vert B(x,R)\vert} 
       \int_{B(x,R)} w_{\delta}(y)^{-\frac 1{p-1}} \, dy \right)^{1-\frac 1 p}  \\
 \le & \left( \frac{2^{\delta}}{(1+ \vert x \vert)^{\delta}} \right)^{\frac{1}{p}}
       \cdot \left( 2^{\frac{\delta}{p-1}} \cdot 
       (1+\vert x \vert)^{\frac{\delta}{p-1}} \right)^{\frac{p-1}{p}}
       = 4^{\frac{\delta}{p}}.
\end{align*}

\noindent
{\bf{(ii)}} We now pretend that $R > 1$. 
\begin{itemize}
\item  We first consider the case when 
$\vert x \vert > 10R$.  Then, 
$1+\vert y \vert \le 1 + \vert x-y \vert + \vert x \vert 
< 1 + R + \vert x \vert < 1 + \left( \frac{1}{10} + 1 \right) \vert x \vert  
\le \frac{11}{10}(1 + \vert x \vert)$.  Moreover, 
$1+ \vert x \vert \le  1 + \vert x-y \vert + \vert y \vert
< 1 + R + \vert y \vert < 1 + \frac{1}{10} \vert x \vert + \vert y \vert$.
Thus, $\frac{9}{10} \left( 1 + \vert x \vert \right) 
< 1 + \frac{9}{10} \vert x\vert < 1 + \vert y \vert$.  So, 
\begin{equation} \label{Eq:Rbig-xbig}
  \frac{9}{10} \left( 1 + \vert x \vert \right) < 1 + \vert y \vert 
  < \frac{11}{10} \, (1 + \vert x \vert).
\end{equation}
By using (\ref{Eq:Rbig-xbig}), we get  
\begin{align*}
     & \left(\frac 1{\vert B(x,R)\vert}  
       \int_{B(x,R)}  w_{\delta}(y)\, dy\right)^{\frac 1 p} 
       \cdot \left( \frac 1{\vert B(x,R)\vert} 
       \int_{B(x,R)} w_{\delta}(y)^{-\frac 1{p-1}} \, dy \right)^{1-\frac 1 p}  \\
 \le & \left[ \left(\frac{10}{9} \right)^{\delta}
       \cdot \frac{1}{(1+ \vert x \vert)^{\delta}} \right]^{\frac{1}{p}}
       \cdot \left[ \left( \frac{11}{10} \right)^{\frac{\delta}{p-1}} 
       \cdot (1+\vert x \vert)^{\frac{\delta}{p-1}} \right]^{\frac{p-1}{p}}
       = \left( \frac{11}{9} \right)^{\frac{\delta}{p}}.
\end{align*} 
 
\item Finally, we presume that $\vert x\vert \le 10 R$.  Since 
$|B(0,R)| = |B(x,R)|$, and $B(x,R ) \subset B(0,11R)$, we obtain that 
\begin{align*} 
     & \left(\frac 1{\vert B(x,R)\vert}  
       \int_{B(x,R)}  w_{\delta}(y)\, dy\right)^{\frac 1 p} 
       \cdot \left( \frac 1{\vert B(x,R)\vert} 
       \int_{B(x,R)} w_{\delta}(y)^{-\frac 1{p-1}} \, dy \right)^{1-\frac 1 p}  \\
 \le & \left(\frac {1}{\vert B(0,R)\vert}  
       \int_{B(0,11 R)} w_\delta (y)\, dy\right)^{\frac 1 p}
       \cdot \left( \frac {1}{\vert B(0,R)\vert} 
       \int_{B(0,11 R)} w_\delta (y)^{-\frac 1{p-1}} \, dy\right)^{1-\frac 1 p} \\ 
 =   & \left(\frac 1 { R^d} \int_0^{11\, R} \frac{1}{(1+r)^\delta} 
       \cdot r^{d-1} \, dr\right)^{\frac 1 p} 
       \cdot \left(\frac{1}{R^d} \int_0^{11R} (1+r)^{\frac{\delta}{p-1}}
       \cdot r^{d-1} \, dr\right)^{1-\frac{1}{p}} \\
 \le & \; c_{\delta, p} \left(\frac 1 {R^d} \int_0^{11R}  
      \frac{1}{r^\delta} \cdot r^{d-1} \, dr \right)^{\frac{1}{p}}  \\
     & \cdot \left[\left(\frac{1}{R^d} \int_0^{11R} r^{d-1} \,dr \right)^{1-\frac{1}{p}} 
       + \left( \frac 1 {R^d} \int_0^{11R} 
       r^{\frac{\delta}{p-1}+d-1} \, dr\right)^{1-\frac{1}{p}} \right]  
\end{align*}
\begin{align*}          
 =   & \; c_{\delta, p} \cdot \left( \frac{11^{d-\delta}}
       {R^{\delta} \cdot (d-\delta)} \right)^{\frac{1}{p}}    \cdot 
       \left[ \left(  \frac{11^d}{d} \right)^{1-\frac{1}{p}} 
       + \left( \frac{11^{\frac{\delta}{p-1}+d} \cdot R^{\frac{\delta}{p-1}}}
       {\frac{\delta}{p-1} +d} \right)^{1-\frac{1}{p}} \right]  \\
 =   & \; c_{\delta, p} \cdot \frac{11^d}{(d-\delta)^{\frac{1}{p}}} \cdot 
       \left( \frac{(11R)^{-\frac{\delta}{p}}}{d^{1-\frac{1}{p}}} 
       + \frac{1}{\left( \frac{\delta}{p-1} + d \right)^{1-\frac{1}{p}}} \right) \\
 \le & \; c_{\delta, p} \cdot \frac{11^d}{(d-\delta)^{\frac{1}{p}}} \cdot 
       \left( \frac{1}{d^{1-\frac{1}{p}}} 
       + \frac{1}{\left( \frac{\delta}{p-1} + d \right)^{1-\frac{1}{p}}} \right).   
\end{align*}
\end{itemize}
From {\bf{(i)}} and {\bf{(ii)}}, the lemma follows. 
\end{proof}
The following result will be used in the proof of Theorem~\ref{Theo:pr}.
\begin{Corollary} \label{Cor:operonb}
Let $d \in \mathbb{Z}^+$, $0<\delta<d$, and $1<p<+\infty$. Then, the 
following assertions hold.
\begin{itemize} 
\item For all $j \in \{1, \ldots, d \}$, the $j$-th \emph{Riesz transform},  
$\mathcal{R}_j := \frac{\partial_j}{\sqrt{-\Delta}}$, is a bounded linear 
operator on $L^p_{w_\delta}(\mathbb{R}^d)$, i.e., there exists a constant 
$C:=C_{d, \delta, p} > 0$ such that for all $f \in L^p_{w_\delta}(\mathbb{R}^d)$, 
\begin{equation*}
  \|\mathcal{R}_j f\|_{L^p_{w_\delta}(\mathbb{R}^d)} 
  \le \, C \, \|f\|_{L^p_{w_\delta}(\mathbb{R}^d)}.
\end{equation*}
\item The \emph{Hardy-Littlewood maximal function operator} is a 
bounded nonlinear operator on $L^p_{w_\delta}(\mathbb{R}^d)$, i.e., 
there is a constant $K:=K_{d, \delta, p} > 0$ such that for all 
$g \in L^p_{w_\delta}(\mathbb{R}^d)$, 
\begin{equation*}
  \|\mathcal{M} g\|_{L^p_{w_\delta}(\mathbb{R}^d)} 
  \le \, K \, \|g\|_{L^p_{w_\delta}(\mathbb{R}^d)}.
\end{equation*}
\end{itemize}
\end{Corollary}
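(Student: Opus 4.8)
The plan is to obtain both assertions as immediate consequences of Lemma~\ref{Lemma:Muck} combined with the classical theory of weighted norm inequalities for the Muckenhoupt classes. Since we are given $0<\delta<d$ and $1<p<+\infty$, Lemma~\ref{Lemma:Muck} applies and guarantees that $w_\delta \in \mathcal{A}_p(\mathbb{R}^d)$. Once this membership is established, the entire argument reduces to specializing two well-known theorems to the particular weight $w = w_\delta$, so that no further estimate of the type carried out in the proof of Lemma~\ref{Lemma:Muck} is needed here.

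For the second assertion, I would invoke the theorem of Muckenhoupt characterizing the weights for which the Hardy--Littlewood maximal operator is bounded: for $1<p<+\infty$, the operator $\mathcal{M}$ is bounded on $L^p(\mathbb{R}^d, w(x)\,dx)$ if and only if $w \in \mathcal{A}_p(\mathbb{R}^d)$. Applying the sufficiency direction of this characterization with $w = w_\delta$ yields, for all $g \in L^p_{w_\delta}(\mathbb{R}^d)$, the inequality $\|\mathcal{M} g\|_{L^p_{w_\delta}(\mathbb{R}^d)} \le K \|g\|_{L^p_{w_\delta}(\mathbb{R}^d)}$ with a constant $K = K_{d,\delta,p}$ depending only on $d$, $\delta$, and $p$ (through the $\mathcal{A}_p$ constant of $w_\delta$).

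For the first assertion, the key observation is that each Riesz transform $\mathcal{R}_j = \partial_j/\sqrt{-\Delta}$ is a Calder\'on--Zygmund singular integral operator: it is given by convolution with a kernel equal, up to a dimensional constant, to $x_j/|x|^{d+1}$, which satisfies the standard size bound and the H\"ormander smoothness condition, and $\mathcal{R}_j$ is bounded on $L^2(\mathbb{R}^d)$. By the weighted boundedness theory for such operators (Coifman--Fefferman), any Calder\'on--Zygmund operator extends to a bounded operator on $L^p(\mathbb{R}^d, w(x)\,dx)$ whenever $1<p<+\infty$ and $w \in \mathcal{A}_p(\mathbb{R}^d)$. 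Taking $w = w_\delta$ and $j \in \{1,\ldots,d\}$ then gives $\|\mathcal{R}_j f\|_{L^p_{w_\delta}(\mathbb{R}^d)} \le C \|f\|_{L^p_{w_\delta}(\mathbb{R}^d)}$ with $C = C_{d,\delta,p}$, as claimed.

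I do not expect any genuinely hard analytic step to remain once Lemma~\ref{Lemma:Muck} is in hand, since all the difficulty has been absorbed into verifying the reverse H\"older condition \eqref{Eq:muckenhoupt} for $w_\delta$. The only point requiring a small verification is confirming that the Riesz kernels satisfy the Calder\'on--Zygmund hypotheses in general dimension $d$; this is classical and dimension-independent, so the two weighted inequalities above follow directly by specialization to $w_\delta$, completing the proof.
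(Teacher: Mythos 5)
Your proposal is correct and follows exactly the route the paper takes: the paper's proof consists of citing that these boundedness properties are standard consequences of $w_\delta \in \mathcal{A}_p(\mathbb{R}^d)$ (established in Lemma~\ref{Lemma:Muck}), with a reference to Grafakos \cite{Gr09}. You have merely made explicit the two classical theorems (Muckenhoupt's characterization for $\mathcal{M}$ and the Coifman--Fefferman weighted Calder\'on--Zygmund theory for $\mathcal{R}_j$) that the paper invokes implicitly.
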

\begin{proof}
These are properties of the Muckenhoupt $\mathcal{A}_p(\mathbb{R}^d)$ class  
(we refer to \cite{Gr09}).
\end{proof}
The main result of this paper and its proof are presented in the next 
section.  It is an improvement of Corollary 2 in \cite{PF_PG}, where 
it was given a characterization of the pressure term, $\nabla p$, in the 
incompressible 3D Navier-Stokes equations (\ref{Eq:NS}) on the whole 
Euclidean space $\mathbb{R}^3$. More specifically, Theorem~\ref{Theo:pr} 
below considers the dimension of the Euclidean space $d\in \{2,3\}$, 
while Corollary 2 in \cite{PF_PG} refers only to the case of dimension 3.  
Moreover, in the hypotheses of Theorem~\ref{Theo:pr}, for $d=3$, 
we suppose that $\gamma \in (0, 3)$, where $\gamma$ is the parameter of 
the weight function, $w_\gamma$, whereas Corollary 2 in \cite{PF_PG} 
concerns the case $\gamma \in (0,5/2)$.  It deserves to remark that 
the hypotheses of Theorem~\ref{Theo:pr} imply the assumptions of 
Theorem 1 in \cite{PF_PG2}; however, Theorem~\ref{Theo:pr} allows us 
to manipulate the pressure term in the Navier-Stokes equations 
more easily, by using the Riesz transforms, than employing the techniques 
of Theorem 1 in \cite{PF_PG2}. Furthermore, we notice that in 
a recent work, Z. Bradshaw and T.P. Tsai (\hspace{1sp}\cite{BTpr}) study 
local expansions for the pressure term in the Navier-Stokes equations. To 
end this section, we observe that some related results can also be found 
in \cite{BTK}, \cite{CW18, PF_OJ}, and 
\cite{JS14, KS07, LR99, LR02, LR16, Le34, VF, Wo17}.

\section{Determination of the pressure term in the incompressible 
Navier-Stokes equations on the full Euclidean space}
The primary goal of this section is Theorem~\ref{Theo:pr} below. It can be 
used to obtain a priori controls for solutions of the 2D incompressible 
Navier-Stokes equations in weighted $L^2$-spaces, and to study existence, 
uniqueness, and regularity of these solutions.  We note that A. Basson 
(\hspace{1sp}\cite{Ba06}) proved uniqueness of the solutions of the 2D 
incompressible Navier-Stokes equations, for uniformly locally square 
integrable initial data.  It was also proposed in \cite{Ba06} the open 
problem concerning the uniqueness of solutions of the 2D incompressible 
Navier-Stokes equations, for initial data ${\textbf{u}}_0$ belonging to 
the space $B_2(\mathbb{R}^2)$, where 
\begin{equation*}
    \| {\textbf{u}}_0 \|_{B_2(\mathbb{R}^2)} ^2 :=  
    \sup_{R \ge 1}  \frac{1}{R^2} \int_{|y|\le R} 
    |{\textbf{u}}_0 (y)|^{2} \, dy  <  +\infty.
\end{equation*}
It deserves to remark that our estimates for the pressure term, given in 
(\ref{Eq:2}) and (\ref{Eq:4}) below, may be used to study the problem regarding 
the uniqueness of solutions of the 2D incompressible Navier-Stokes equations, 
when the initial data belongs to the weighted $L^2$-space, 
$L^2_{w_\gamma}(\mathbb{R}^2)$, with $0<\gamma<2$.  With respect to the 
aforementioned spaces, we observe that 
(see Section 7 in \cite{PF_PG2}) 
\begin{equation*}
  L^2_{w_{\gamma}}(\mathbb{R}^2) \subset B_{2}(\mathbb{R}^2) 
  \subset L^p_{w_{\delta}}(\mathbb{R}^2), 
\end{equation*}
for $0 < \gamma \leq 2 < \delta$. Now, we go to the major theorem of this 
manuscript.

\begin{Theorem}   \label{Theo:pr}
Let $d \in \{ 2 , 3\}$, $0<\gamma<d$, and $0<T<+\infty$. We suppose 
that $\mathbb{F}:=\left(F_{i,j}\right)_{1\le i,j\le d} 
\in L^2((0,T), L^2_{w_{\gamma}}(\mathbb{R}^d))$. Let $\textup{\textbf{u}}$ be a 
solution of the problem 
\begin{equation}  \label{Eq:NSpr} 
 \left\{ 
   \begin{array}{l} 
   \partial_t \textup{\textbf{u}} = \Delta \textup{\textbf{u}} 
   - (\textup{\textbf{u}} \cdot \nabla) \textup{\textbf{u}} 
   -  \nabla q + \nabla \cdot \mathbb{F}, \\  \\
   \nabla \cdot \textup{\textbf{u}} = 0,
   \end{array}
   \right.
\end{equation}
such that $\textup{\textbf{u}} \in L^\infty((0,T), L^2_{w_\gamma}(\mathbb{R}^d))$,  
and $\nabla \textup{\textbf{u}} \in L^2((0,T), L^2_{w_\gamma}(\mathbb{R}^d))$. 
We also assume that the pressure, $q$, belongs to the space of distributions 
$\mathcal{D}'((0,T) \times \mathbb{R}^d)$. Then, the gradient of the 
pressure is given by the formula
\begin{equation*}
  \nabla  q =  \nabla \left(\sum_{i,j=1}^d \mathcal{R}_i 
  \mathcal{R}_j(u_i u_j - F_{i,j}) \right).
\end{equation*} 
If we, moreover, suppose that 
$r \in \left(1, \min \{ \frac{d}{d-1}, \frac{d}{\gamma} \}\right)$, 
and $a \in \mathbb{R}$ satisfies $\frac{2}{a} + \frac{d}{r} = d$, we get
\begin{equation*}
  \sum_{i,j=1}^d \mathcal{R}_i \mathcal{R}_j (u_i u_j) 
  \in L^{a}((0,T),L^{r}_{w_{ r \gamma}}(\mathbb{R}^d)), 
\end{equation*}
and  
\begin{equation*}
  \sum_{i,j=1}^d \mathcal{R}_i \mathcal{R}_j F_{i,j} 
  \in L^{2}((0,T),L^{2}_{w_\gamma}(\mathbb{R}^d)).
\end{equation*}
\end{Theorem}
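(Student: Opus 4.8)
The plan is to establish the two integrability statements first, since they guarantee that $P := \sum_{i,j=1}^d \mathcal{R}_i\mathcal{R}_j(u_iu_j - F_{i,j})$ is a well-defined (locally integrable) distribution, and only afterwards to derive the representation of $\nabla q$ by comparison with the formula of \cite{PF_PG2}. The forcing estimate is immediate: since $0<\gamma<d$ and $p=2$, Corollary~\ref{Cor:operonb} (which rests on Lemma~\ref{Lemma:Muck}) gives that each $\mathcal{R}_i\mathcal{R}_j$ is bounded on $L^2_{w_\gamma}(\mathbb{R}^d)$, so $\|\mathcal{R}_i\mathcal{R}_j F_{i,j}\|_{L^2_{w_\gamma}}\le C\|F_{i,j}\|_{L^2_{w_\gamma}}$ for a.e.\ $t$; squaring, integrating in $t$ over $(0,T)$, and summing in $i,j$ yields $\sum_{i,j}\mathcal{R}_i\mathcal{R}_j F_{i,j}\in L^2((0,T),L^2_{w_\gamma}(\mathbb{R}^d))$.

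For the quadratic term I would reduce the weighted estimate to an unweighted energy interpolation. Set $v := \mathbf{u}\,w_{\gamma/2}$. Then $\mathbf{u}\in L^\infty_tL^2_{w_\gamma}$ is equivalent to $v\in L^\infty((0,T),L^2(\mathbb{R}^d))$, and since $|\nabla w_{\gamma/2}|\le \tfrac\gamma2 w_{\gamma/2}$ pointwise, the product rule gives $|\nabla v|\le |\nabla\mathbf{u}|\,w_{\gamma/2}+\tfrac\gamma2|v|$; as $\nabla\mathbf{u}\in L^2_tL^2_{w_\gamma}$ and $v\in L^\infty_tL^2\subset L^2_tL^2$ on the finite interval $(0,T)$, this gives $\nabla v\in L^2((0,T),L^2(\mathbb{R}^d))$. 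I would then apply the Gagliardo--Nirenberg inequality $\|v\|_{L^{2r}}\le C\|v\|_{L^2}^{1-\theta}\|\nabla v\|_{L^2}^{\theta}$ with $\theta=d\big(\tfrac12-\tfrac1{2r}\big)$; the range $1<r<\min\{d/(d-1),d/\gamma\}$ keeps $2r$ strictly below the Sobolev conjugate $2d/(d-2)$ (interpreted as $+\infty$ when $d=2$), so $0<\theta<1$ and the inequality is valid. Raising to the power $2a$, absorbing $\|v\|_{L^2}^{(1-\theta)2a}$ by the $L^\infty_tL^2$ bound, and integrating in time, the relation $\tfrac2a+\tfrac dr=d$ is precisely what forces $2a\theta=2$, so the leftover factor $\int_0^T\|\nabla v\|_{L^2}^{2}\,dt$ is finite; hence $v\in L^{2a}((0,T),L^{2r}(\mathbb{R}^d))$. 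Unwinding the weight via the pointwise identity $\big\||\mathbf{u}|^2\big\|_{L^r_{w_{r\gamma}}}=\|v\|_{L^{2r}}^2$ together with $|u_iu_j|\le|\mathbf{u}|^2$ gives $u_iu_j\in L^a((0,T),L^r_{w_{r\gamma}}(\mathbb{R}^d))$. Finally, $r<\tfrac d\gamma$ forces $0<r\gamma<d$ while $1<r<\infty$, so Corollary~\ref{Cor:operonb} applies on $L^r_{w_{r\gamma}}(\mathbb{R}^d)$ and yields $\sum_{i,j}\mathcal{R}_i\mathcal{R}_j(u_iu_j)\in L^a((0,T),L^r_{w_{r\gamma}}(\mathbb{R}^d))$.

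For the representation of $\nabla q$ I would first check that the hypotheses imply those of Theorem~1 in \cite{PF_PG2}: since $\gamma<d$ one has $w_d\le w_\gamma$, whence $\mathbf{u}\in L^\infty_tL^2_{w_\gamma}\subset L^2_tL^2_{w_d}$, and a Cauchy--Schwarz estimate using $\int(1+|x|)^{\gamma-2d}\,dx<\infty$ (finite precisely because $\gamma<d$) gives $\mathbb{F}\in L^2_tL^2_{w_\gamma}\subset L^1_tL^1_{w_d}$. Theorem~1 in \cite{PF_PG2} then represents $q$, up to a distribution depending only on $t$, as $\sum_{i,j}\big[(\varphi\,\partial_i\partial_j G_d)+((1-\varphi)\,\partial_i\partial_j G_d)\big]*(u_iu_j-F_{i,j})$. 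The two cut-off pieces sum to $\partial_i\partial_j G_d*(\cdot)$, and because $-\Delta G_d=\delta$ one has the operator identity $\partial_i\partial_j G_d*(\cdot)=\partial_i\partial_j(-\Delta)^{-1}(\cdot)=\mathcal{R}_i\mathcal{R}_j(\cdot)$ (equivalently, the Fourier multiplier $-\xi_i\xi_j/|\xi|^2$); thus the \cite{PF_PG2} expression equals $P$. Since the integrability just proved makes $P$ locally integrable, subtracting and taking the spatial gradient annihilates the $t$-dependent term and yields $\nabla q=\nabla P$.

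I expect the principal difficulties to be twofold. First, identifying the two cut-off convolutions of \cite{PF_PG2} with the single operator $\mathcal{R}_i\mathcal{R}_j$ must be justified for functions lying only in weighted $L^r$ rather than in $L^2(\mathbb{R}^d)$: there the Riesz transforms are defined through the weighted boundedness of Corollary~\ref{Cor:operonb}, and reconciling this with the principal-value/convolution representation requires approximation by nice functions together with the uniform bounds. Second, in the quadratic estimate one must keep all exponents admissible at once --- $r<\min\{d/(d-1),d/\gamma\}$ places $2r$ in the Gagliardo--Nirenberg range while preserving $r\gamma<d$ for the weighted Riesz bound, and $\tfrac2a+\tfrac dr=d$ is exactly the scaling that closes the time integration. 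The clean handling of the weight-derivative term $\mathbf{u}\,\nabla w_{\gamma/2}$ via $|\nabla w_{\gamma/2}|\le\tfrac\gamma2 w_{\gamma/2}$ is the small computation that makes the reduction to the unweighted energy space go through.
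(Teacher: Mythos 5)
Your two integrability claims are proved essentially as in the paper: the paper likewise applies the Gagliardo--Nirenberg inequality to $\sqrt{w_\gamma}\,\mathbf{u}$ (your $v$), uses the scaling relation $\frac{2}{a}+\frac{d}{r}=d$ to close the time integration against the $L^\infty_t L^2_{w_\gamma}$ bound, and then invokes Corollary~\ref{Cor:operonb} on $L^r_{w_{r\gamma}}(\mathbb{R}^d)$ and on $L^2_{w_\gamma}(\mathbb{R}^d)$ (admissible because $r\gamma<d$ and $\gamma<d$); the only cosmetic difference is that the paper splits $u_iu_j$ by H\"older into $L^2\times L^b$ with $\frac1r=\frac12+\frac1b$ instead of your direct bound $\|u_iu_j\|_{L^r_{w_{r\gamma}}}\le\|v\|_{L^{2r}}^2$. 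For the identity $\nabla q=\nabla p$, however, you take a genuinely different route. The paper is self-contained: taking the divergence of (\ref{Eq:NSpr}) gives $\Delta(q-p)=0$; it then mollifies $\nabla(q-p)$ in $(t,x)$ and uses the equation itself to rewrite the mollified $\nabla q$ as convolutions of $\mathbf{u}$, $-\mathbf{u}\otimes\mathbf{u}+\mathbb{F}$ and $p$ with derivatives of the mollifier --- this is how a pressure known only as a distribution acquires quantitative bounds --- so that the mollification is a harmonic tempered distribution lying in $L^{\sigma}_{w_{\sigma\eta}}(\mathbb{R}^d)$ with $\sigma\eta<d$; harmonic tempered distributions are polynomials, that space contains no nontrivial polynomial, hence the mollification vanishes, and letting the mollification parameter tend to zero finishes. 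You instead import Theorem 1 of \cite{PF_PG2} (whose hypotheses do hold here, as the paper itself remarks, and as your Cauchy--Schwarz and $w_d\le w_\gamma$ checks confirm) and must then identify $\sum_{i,j}\bigl[(\varphi\,\partial_i\partial_jG_d)*\,\cdot\,+((1-\varphi)\,\partial_i\partial_jG_d)*\,\cdot\,\bigr]$ with $\sum_{i,j}\mathcal{R}_i\mathcal{R}_j$ on $L^r_{w_{r\gamma}}+L^2_{w_\gamma}$. That identification, which you correctly flag as the outstanding debt, does go through by density: $C_0^\infty(\mathbb{R}^d)$ is dense in these weighted spaces, the two operators agree on $C_0^\infty$ (a Fourier computation; for $d=2$ one uses that $\xi_i\xi_j$ annihilates the finite-part correction in $\widehat{G_2}$), the Riesz side is continuous by Corollary~\ref{Cor:operonb}, and the convolution side is continuous into $\mathcal{D}'$ --- the near piece as convolution with a compactly supported distribution, the far piece because its kernel is $O\bigl((1+|x|)^{-d}\bigr)$ and $\int(1+|x-y|)^{-d\frac{r}{r-1}}(1+|y|)^{\frac{r}{r-1}\gamma}\,dy<\infty$ exactly when $r\gamma<d$. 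The trade-off: your argument is shorter but rests on the external characterization plus this kernel lemma, which is real work in the weighted setting; the paper's mollification/Liouville mechanism needs nothing about $q$ beyond $q\in\mathcal{D}'((0,T)\times\mathbb{R}^d)$ and keeps the proof self-contained, which is presumably why the authors chose it even while noting that the hypotheses of \cite{PF_PG2} are satisfied.
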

\begin{proof}
We let 
\begin{equation*}
 p := \sum_{i,j=1}^d \mathcal{R}_i \mathcal{R}_j (u_i u_j - F_{i,j}).
\end{equation*}
We will show that $\nabla q - \nabla p = 0$, where $q$ is the pressure term 
in the incompressible Navier-Stokes equations on $\mathbb{R}^d$ given in 
(\ref{Eq:NSpr}). We begin by demonstrating that $p$ is well-defined. Let  
$ 1 < r < \min \{ \frac{d}{d-1}, \frac{d}{\gamma} \}$. We take 
$a \in \mathbb{R}$ such that $\frac{2}{a} + \frac{d}{r} = d$, 
and $b \in \mathbb{R}$ such that $\frac{1}{r} = \frac{1}{2} + \frac{1}{b}$. 
Thus, $\frac{2}{a} = d - \frac{d}{r} = \frac{d}{2} - \frac{d}{b}$, and   
$0 < d - \frac{d}{r} < 1$. \\
As $\sqrt{w_\gamma} \, {\textbf{u}} \in L^\infty((0,T), L^2(\mathbb{R}^d))$, 
and $\sqrt{w_\gamma} \, \nabla {\textbf{u}} \in L^2((0,T), L^2(\mathbb{R}^d))$, 
by using the Gagliardo-Nirenberg interpolation inequality, we obtain that 
$\sqrt{w_\gamma} \, {\textbf{u}} \in L^a((0,T), L^b(\mathbb{R}^d))$.  In fact, 
for almost all $t \in (0, T)$, we get 
\begin{align*}
    \| \sqrt{w_\gamma} \, {\textbf{u}}(t) \|_{L^b(\mathbb{R}^d)} 
    & \le C_{d, b} \; \| \nabla (\sqrt{w_\gamma} \, {\textbf{u}}(t)) \|_{L^2  
      (\mathbb{R}^d)}^{\frac{d}{2}-\frac{d}{b}} \; \cdot \;
      \| \sqrt{w_\gamma} \, {\textbf{u}}(t) 
      \|_{L^2 (\mathbb{R}^d)}^{1-(\frac{d}{2}-\frac{d}{b})}  \\
    & \le \gamma \; C_{d, b} 
      \left(\| \sqrt{w_\gamma} \, {\textbf{u}}(t) \|_{L^2(\mathbb{R}^d)} 
      +  \| \sqrt{w_\gamma} \, \nabla {\textbf{u}}(t) \|_{L^2(\mathbb{R}^d)} 
      \right)^{\frac{d}{2}-\frac{d}{b}}  \\
    &  \;\;\;\; \cdot \, \| \sqrt{w_\gamma} \, {\textbf{u}}(t) 
      \|_{L^2 (\mathbb{R}^d)}^{1-(\frac{d}{2}-\frac{d}{b})}.
\end{align*}
As $\frac{2}{a} = \frac{d}{2} - \frac{d}{b}$, and integrating with respect 
to time, we see that
\begin{align} \label{Eq:0a}
  \int_0^T \| \sqrt{w_\gamma} \, {\textbf{u}}(s) \|_{L^b(\mathbb{R}^d)}^a ds
  & \le \gamma \; C_{d, b} \; 
    \| \sqrt{w_\gamma} \, {\textbf{u}} \|_{L^\infty((0,T), L^2(\mathbb{R}^d)) }^{a-2} 
    \nonumber \\
  & \;\;\;\; \cdot \int_0^T ( \| \sqrt{w_\gamma} \, {\textbf{u}}(s) \|_{L^2(\mathbb{R}^d)} 
    + \| \sqrt{w_\gamma} \, \nabla {\textbf{u}}(s) \|_{L^2(\mathbb{R}^d)})^2 ds \\ 
  & < +\infty. \nonumber 
\end{align}
Since $\sqrt{w_\gamma} \, {\textbf{u}} \in L^a((0,T), L^b(\mathbb{R}^d))$, and  
using H\"older's inequality with indices $\frac{2}{r}$ and $\frac{b}{r}$, 
we obtain that for all $i,j \in \{1, \ldots ,d\}$, 
$w_\gamma \, u_i u_j \in L^a((0,T), L^r(\mathbb{R}^d))$, and
\begin{align} \label{Eq:0b}
  \| u_i u_j \|_{L^{a}\left((0,T), L^{r}_{w_{r \gamma}}(\mathbb{R}^d)\right)}
  & = \| \, \| \sqrt{w_\gamma} \, u_i (\cdot) \, \sqrt{w_\gamma} \, 
    u_j (\cdot) \|_{L^{r}(\mathbb{R}^d))} \, \|_{L^{a}(0,T) }  
    \nonumber \\
  & \le \| \, \| \sqrt{w_\gamma} \, u_i (\cdot) \|_{L^{2}(\mathbb{R}^d))} \, \cdot \, 
    \| \sqrt{w_\gamma} \, u_j (\cdot) \|_{L^{b}(\mathbb{R}^d))} \, \|_{L^{a}(0,T) } 
    \nonumber \\
  & \le \| \sqrt{w_\gamma} \, u_i \|_{L^{\infty} \left((0,T), L^{2}(\mathbb{R}^d) \right)} 
    \, \cdot \, 
    \| \sqrt{w_\gamma} \, u_j \|_{L^{a} \left((0,T), L^{b}(\mathbb{R}^d) \right)}.
\end{align}
\noindent
By the continuity of the Riesz transforms on $L^r_{w_{r \gamma}}(\mathbb{R}^d)$ 
(see Corollary~\ref{Cor:operonb} above), and employing the fact that 
$0 < r \, \gamma < d$, we get 
\begin{equation}  \label{Eq:1}
  \sum_{i,j=1}^d \mathcal{R}_i \mathcal{R}_j (u_i u_j)
  \in  L^{a}((0,T), L^{r}_{w_{r \gamma}}(\mathbb{R}^d)),
\end{equation}
and, more precisely, the following estimate holds
\begin{align}  \label{Eq:2}
  & \Big{\|} \, \sum_{i,j=1}^d \mathcal{R}_i \mathcal{R}_j (u_i u_j)  
    \, \Big{\|}_{L^{a}((0,T), L^{r}_{w_{r \gamma}}(\mathbb{R}^d))}
    \nonumber \\
  & \le C_{d, \gamma, r} 
    \sum_{i,j=1}^d \| u_i u_j \|_{L^{a}((0,T), L^{r}_{w_{r \gamma}}(\mathbb{R}^d))} 
    \nonumber \\
  & \le C_{d, \gamma, r} \sum_{i,j=1}^d 
    \|\sqrt{w_\gamma} \, u_i \|_{L^{\infty} \left((0,T), L^{2}(\mathbb{R}^d) \right)} 
    \, \cdot \, 
    \| \sqrt{w_\gamma} \, u_j \|_{L^{a} \left((0,T), L^{b}(\mathbb{R}^d) \right)}
    \nonumber \\
  & \le \gamma^{\frac{1}{a}} \; {\tilde{C}}_{d, \gamma, r}  \; 
    \| {\textbf{u}} \|_{L^{\infty}((0,T), L^{2}_{w_\gamma}(\mathbb{R}^d))}^{1+\frac{a-2}{a}} 
    \nonumber \\
  & \;\;\;\; \cdot     
    \left( \int_0^T ( \| {\textbf{u}}(s) \|_{L^2_{w_\gamma}(\mathbb{R}^d)} 
    + \| \nabla {\textbf{u}}(s) \|_{L^2_{w_\gamma}(\mathbb{R}^d)})^2 ds 
    \right)^{\frac{1}{a}}, 
\end{align}
where we have used (\ref{Eq:0b}) and (\ref{Eq:0a}). Using again the continuity 
of the Riesz transforms, but this time on the space $L^2_{w_{\gamma}}(\mathbb{R}^d)$, 
we obtain that  
\begin{equation}  \label{Eq:3}
  \sum_{i,j=1}^d \mathcal{R}_i \mathcal{R}_j F_{i, j} 
  \in L^{2}((0,T), L^{2}_{w_\gamma}(\mathbb{R}^d)),
\end{equation}
and, moreover, 
\begin{equation} \label{Eq:4}
  \Big{\|}\sum_{i,j=1}^d \mathcal{R}_i \mathcal{R}_j F_{i,j} \, \, 
  \Big{\|}_{L^{2}((0,T), L^{2}_{w_\gamma}(\mathbb{R}^d))} 
  \le C_{d, \gamma} \sum_{i,j=1}^d 
      \| F_{i,j} \|_{L^{2}((0,T), L^{2}_{w_\gamma}(\mathbb{R}^d))}. 
\end{equation}
It follows from (\ref{Eq:1})-(\ref{Eq:4}) that $p$ is well-defined, and 
\begin{equation*}
  p:= \sum_{i,j=1}^d \mathcal{R}_i \mathcal{R}_j (u_i u_j) 
  - \sum_{i,j=1}^d \mathcal{R}_i \mathcal{R}_j F_{i,j} 
  \in L^a((0,T), L^{r}_{w_{s\gamma}}(\mathbb{R}^d)) 
  + L^2 ((0,T), L^2_{w_\gamma}(\mathbb{R}^d )).
\end{equation*}
We now consider the expression $\nabla q - \nabla p$. By taking the 
divergence of both members of the first equation of (\ref{Eq:NSpr}), 
we find that $\Delta (q-p)=0$. In fact, as $\nabla \cdot {\textbf{u}} = 0$,
we have that 
$\nabla \cdot \partial_t {\textbf{u}} = \nabla \cdot \Delta {\textbf{u}} = 0$, 
and
\begin{align} \label{Eq:Lapl}
    \Delta q = \nabla \cdot \nabla q \nonumber 
    & = -\nabla \cdot (({\textbf{u}} \cdot \nabla) {\textbf{u}}
        - \nabla \cdot \mathbb{F}) \nonumber \\
    & = -\nabla \cdot (\nabla \cdot ({\textbf{u}} \otimes {\textbf{u}}) 
        - \nabla \cdot \mathbb{F}) \nonumber \\
    & = -\sum_{i,j=1}^d \partial_i \partial_j 
        (u_i u_j - F_{i,j}) \nonumber \\
    & = \Delta \sum_{i,j=1}^d \frac{\partial_i}{\sqrt{-\Delta}}  
        \frac{\partial_j}{\sqrt{-\Delta}} (u_i u_j - F_{i,j})  \nonumber \\
    & = \Delta p.
\end{align}
Let $\varepsilon_0 \in \left(0, \frac{T}{2}\right)$.  Let 
$\alpha \in C_0^{\infty}(\mathbb{R})$ be a nonnegative real-valued function 
such that for all $t \in \{s \in \mathbb{R};|s| \ge \varepsilon_0 \}$, 
$\alpha (t)= 0$, and  $\int_{\mathbb{R}} \alpha(s) \, ds= 1$.  Moreover, let 
$\beta \in C_0^{\infty}(\mathbb{R}^d)$ be a nonnegative real-valued 
function such that $\int_{\mathbb{R}^d}\beta(x) \, dx = 1$. We use the 
symbol $\alpha\otimes\beta$ to indicate the real-valued function given by
\begin{align*}
   \alpha\otimes\beta \colon \mathbb{R} \times \mathbb{R}^d 
   &\longrightarrow \mathbb{R}  \\
   (s, x) &\longmapsto (\alpha\otimes\beta) (s, x):= \alpha(s) \, \beta(x).
\end{align*}
Also, for all $(s, x) \in \mathbb{R} \times \mathbb{R}^d$, we write 
$(\alpha \otimes \Delta \beta)(s, x) := \alpha(s) \, \Delta \beta(x) 
\in \mathbb{R}$, and $(\alpha \otimes \nabla \beta)(s, x) := (\alpha(s) \, 
\partial_1 \beta (x), \ldots, \alpha(s) \, \partial_d \beta (x)) \in \mathbb{R}^d$.
Then, the \textit{mollification}, 
$A_{\alpha, \beta, t}$, of $\nabla q - \nabla p$ given by 
\begin{equation*}
  A_{\alpha,\beta,t}(x) := ((\alpha \otimes \beta) 
  * \nabla q - (\alpha \otimes \beta) * \nabla p) (t,x)  
\end{equation*}
is well-defined on $(\varepsilon_0, T-\varepsilon_0) \times \mathbb{R}^d$.   
Furthermore, for all $t \in (\varepsilon_0, T-\varepsilon_0)$, 
\begin{align}\label{Eq:convol} 
  A_{\alpha,\beta,t} 
  & =((-\partial_t \, \alpha \otimes \beta + \alpha \otimes \Delta\beta) 
    * {\textbf{u}})(t,\cdot ) 
    + ((\alpha \otimes \nabla \beta)  
    * (- {\textbf{u}} \otimes {\textbf{u}} +\mathbb{F}))(t,\cdot )  
    \nonumber \\
  & \;\;\;\; -((\alpha \otimes \nabla \beta) * p) (t,\cdot),
\end{align} 
where for every $s \in (\varepsilon_0, T-\varepsilon_0)$, and for almost 
all $x \in \mathbb{R}^d$, $((\alpha \otimes \nabla \beta) * 
(-{\textbf{u}} \otimes {\textbf{u}} +\mathbb{F}))(s,x)$ denotes the 
vector whose i-th coordinate is given by $\displaystyle \sum_{j=1}^d 
((-u_i u_j + F_{i,j}) * (\alpha \otimes \partial_i \beta)(s,x)$, for 
each $i \in \{1, \ldots, d\}$. For almost all 
$t \in (\varepsilon_0, T-\varepsilon_0)$, we will prove 
that $A_{\alpha,\beta,t}$ is necessarily zero, by demonstrating that it is a 
harmonic tempered distribution, and therefore it is a polynomial, and finally 
by showing that $A_{\alpha,\beta,t}$ belongs to a space which does not contain 
nontrivial polynomials.  Since $\gamma \in (0, d)$, and 
\begin{equation*}
  \lim_{r \to \, 1^+} \frac{d(2-r)}{2r} + \frac{\gamma}{2} 
  = \frac{d}{2} + \frac{\gamma}{2} < d, 
\end{equation*}
there exist real numbers $\sigma$ and $\eta$ such that
\begin{equation}  \label{Eq:5}
     1 < \sigma < \min \Big{\{} \frac{d}{d-1}, \frac{d}{\gamma} \Big{\}}, 
\end{equation}
and
\begin{equation}  \label{Eq:6}
  \max \Big{\{} \gamma , \, \frac{d(2-\sigma)}{2\sigma}
  +\frac{\gamma}{2} \Big{\}} < \, \eta < \, \frac{d}{\sigma} \, < \, d.
\end{equation}
For almost every $t \in (0,T)$, by using (\ref{Eq:1}), (\ref{Eq:3}) 
and (\ref{Eq:5}), we see that  
\begin{equation*}
  p(t) \, \in \, L^{\sigma}_{w_{\sigma\gamma}}(\mathbb{R}^d) 
  + L^2_{w_\gamma}(\mathbb{R}^d), 
\end{equation*} 
and, of course, by employing (\ref{Eq:0b}) and (\ref{Eq:5}), we get
\begin{equation*}
  (-{\textbf{u}} \otimes {\textbf{u}} +\mathbb{F})(t) 
  \, \in \, L^{\sigma}_{w_{\sigma \gamma}}(\mathbb{R}^d) 
  + L^2_{w_\gamma}(\mathbb{R}^d).
\end{equation*}  
Since for all function $\varphi \in \mathcal{C}_0^{\infty}(\mathbb{R}^d)$, and for 
every locally integrable function $f:\mathbb{R}^d \to \mathbb{C}$, 
$\vert f*\varphi\vert\le C_\varphi \, \mathcal{M} f$, it follows that 
the convolution with a function belonging to 
$\mathcal{C}_0^{\infty}(\mathbb{R}^d)$ is a bounded linear operator defined on 
$L^2_{w_\gamma}(\mathbb{R}^d)$, and also on 
$L^{\sigma}_{w_{\sigma \gamma}}(\mathbb{R}^d)$. 
Thus, from (\ref{Eq:convol}), for almost every 
$t \in (\varepsilon_0, T-\varepsilon_0)$, we obtain that 
\begin{equation*}
  A_{\alpha,\beta,t} \in L^{\sigma}_{w_{\sigma \gamma}}(\mathbb{R}^d) 
  + L^2_{w_\gamma}(\mathbb{R}^d), 
\end{equation*}  
and therefore $A_{\alpha,\beta,t}$ is a tempered  distribution.  Moreover, for 
all $t \in (\varepsilon_0, T-\varepsilon_0)$, $A_{\alpha,\beta,t}$ is a harmonic 
polynomial. In fact, let $s \in (\varepsilon_0, T-\varepsilon_0)$. Then, 
\begin{equation*}
  \Delta A_{\alpha, \beta, s}=(\alpha \otimes \beta)*
  \nabla (\Delta (q-p))(s, \cdot)=0,
\end{equation*}  
where in the last expression we have used (\ref{Eq:Lapl}).  In addition, by 
using H\"older's inequality, we obtain that 
\begin{equation*}
  L^{\sigma}_{w_{\sigma \gamma}}(\mathbb{R}^d) +L^2_{w_\gamma}(\mathbb{R}^d) 
  \subset L^{\sigma}_{w_{\sigma \eta}}(\mathbb{R}^d). 
\end{equation*}
In fact, for all 
$g \in L^{\sigma}_{w_{\sigma \gamma}}(\mathbb{R}^d) + L^2_{w_\gamma}(\mathbb{R}^d)$, 
we see that 
\begin{align*}
  \int |g(x)|^{\sigma} w_{\sigma \eta}(x) \, dx 
  & \le \left(\int (|g(x)|^{\sigma} 
    w_{\frac{\sigma \gamma}{2}}(x))^{\frac{2}{\sigma}} \, dx 
    \right)^{\frac{\sigma}{2}} \\ 
  &\;\;\;\;  \cdot \left( \int (1+|x|)^
    {-(\sigma \eta \, - \, \frac{\sigma \gamma}{2})(\frac{2}{2-\sigma})} 
    \, dx \right)^{1-\frac{\sigma}{2}}  \\
  & < + \infty,    
\end{align*}
where in the last expression we have used the fact that 
$\displaystyle \left(\eta - \frac{\gamma}{2}\right) \cdot 
\left(\frac{2\sigma}{2-\sigma}\right) > d$, which follows from (\ref{Eq:6}). 
By using (\ref{Eq:6}), we see that $\sigma \eta < d$. Then, the space 
$L^{\sigma}_{w_{\sigma \eta}}(\mathbb{R}^d)$ does not contain nontrivial 
polynomials.  Hence, for almost all $t \in (\varepsilon_0, T-\varepsilon_0)$, 
$A_{\alpha,\beta,t}= 0$. For all $\varepsilon > 0$, for every 
$t \in \mathbb{R}$, and for each $x \in \mathbb{R}^d$, we now let 
\begin{equation*}
  \alpha_\varepsilon (t) := \frac {1}{\varepsilon} 
  \alpha \left( \frac{t}{\varepsilon} \right),
  \;\;\; \text{ and } \;\;\;  
  \beta_\varepsilon (x) := \frac {1}{\varepsilon^d} 
  \beta \left( \frac {x}\varepsilon \right). 
\end{equation*}
Since for any $\varepsilon > 0$, $\alpha_\varepsilon \otimes \beta_\varepsilon 
\in \mathcal{C}_0^{\infty}(\mathbb{R} \times \mathbb{R}^d)$ is a 
nonnegative real-valued function satisfying
\begin{equation*}
  \int_{\mathbb{R} \times \mathbb{R}^d} 
  (\alpha_\varepsilon \otimes \beta_\varepsilon)(s, x) \, ds \, dx = 1, 
\end{equation*}  
we conclude that for almost all $t \in (0, T)$, 
\begin{equation*}
  \nabla (q-p)(t,\cdot ) = \lim_{\varepsilon \to \, 0} 
  A_{\alpha_\varepsilon, \beta_\varepsilon, t} = 0.
\end{equation*}
\end{proof}


\end{document}